\newtheorem{thm}{Theorem}[section]
\newtheorem{lem}[thm]{Lemma}
\newtheorem{prop}[thm]{Proposition}
\newcommand{\R}{\mathbb{R}}
\newcommand{\N}{\mathbb{N}}
\newcommand{\Z}{\mathbb{Z}}
\title{Transitivity of Infinite-Dimensional Extensions of Anosov Diffeomorphisms}
\author{Zev Rosengarten$^1$ and Asaf Reich$^2$}
\date{August 2012}						
\begin{document}
\begin{abstract}
We consider extensions of Anosov diffeomorphisms of an infranilmanifold by the real vector space $\R^\omega$. Our main result, based on the analogous theorem in finite dimensions proven by Nitica and Pollicott, is that any H$\ddot{o}$lder cocycle satisfying an obvious obstruction induces a topologically transitive extension (topologically weak mixing, in fact). We show how to construct cocycles satisfying these conditions for any Anosov diffeomorphism, and then observe that unlike the finite dimensional case, where cocycles satisfying the obstruction are $C^0$-stably transitive, there can be no infinite-dimensional stably transitive cocycles, with respect to several spaces and metrics of cocycles.
\end{abstract}

\maketitle
\let\thefootnote\relax\footnotetext{$^1$zrosenga@princeton.edu}
\let\thefootnote\relax\footnotetext{$^2$azmreich@math.brown.edu}
\let\thefootnote\relax\footnotetext{This research was performed by the authors at the 2012 Penn State University Math REU, funded by NSF grant 428-44 64BB. We would like to thank our mentors in this project, Viorel Nitica and Misha Guysinsky, who made this work possible. We would also like to thank the National Science Foundation and the Penn State math department.}
\section{Introduction}

In this paper we study the transitivity of certain dynamical systems of interest.  The definitions of transitivity differ slightly among authors (though they will agree on ``nice" spaces), so we will specify that a map $T: X \to X$ ($X$ a topological space) is called transitive if there exists $x_0 \in X$ such that the forward orbit $\{T^n(x_0) | n \in \N \}$ is dense in $X$. This natural condition can be thought of as some kind of ``chaotic" behavior by the system. A certain kind of maps which have proven to be particularly interesting and amenable to analysis are called Anosov. They are diffeomorphisms of a manifold such that the tangent bundle can be written as a sum $TX=E^s \oplus E^u$ in a continuous way, and with respect to some Riemannian metric the lengths of vectors in $E^s$ shrink exponentially under $T$ by some factor $\lambda$, while growing exponentially by $\lambda$ if they are in $E^u$. (In all of this paper, we will sometimes use a Riemannian metric on $X$ for definitions and results, but they will never depend on the choice of the metric, only the topology of $X$.) Anosov maps are thus ``hyperbolic": at each point there are some directions in which $T$ expands distances, and others in which it contracts them, but no directions that have distance preserved. The prototypical examples are matrices with integer entries in $SL(n,\Z)$ with no eigenvalues of norm 1, acting on the $n$-torus $\mathbb{T}^n =\R^n/\Z^n$ (for instance $(\begin{smallmatrix}2 &1\\ 1 &1\end{smallmatrix})$). Anosov diffeomorphisms have many nice properties and there are powerful tools for understanding their behavior, for example the Anosov closing lemma we use later states that if $T$ is Anosov, points that are ``almost" $n$-periodic are approximated by actual $n$-periodic points.  A major conjecture in smooth dynamics is that all Anosov maps are transitive. All known examples of such maps are proven to be transitive - in fact they take place on infranilmanifolds, which is conjectured to be necessary for the existence of an Anosov map, and transitivity has been proven in the infranilmanifold case (we will not go into the precise definition of such manifolds.)

We will study \emph{extensions} of such maps. A map $T: X \to X$ has an extension $T': Y \to Y$, if $Y$ is a space with surjective continuous map $\pi: Y \to X$ such that $\pi \circ T'=T \circ \pi$. An example is a map of a manifold being extended, by its differential, to the tangent bundle. This paper is one of many that have studied transitivity of extensions of hyperbolic maps; while we generalize \cite{NP}, other papers include \cite{N} which uses a shift for the base map, and \cite{MNT} which studies extension by groups similar to the special Euclidean groups.

\section{Statement of Result}

Let $X$ be a Riemannian manifold, $T: X \rightarrow X$ a transitive Anosov diffeomorphism. Given 
a group $G$ and a map $f: X \rightarrow G$ called a cocycle, we consider the $G$-extension $T_f : X\times G 
\rightarrow X \times G, T_f(x, g) : = (T(x), f(x)\cdot g)$. We say 
that $f$ is a transitive cocycle if $T_f$ is transitive. We say two cocycles $f$, $g$ are cohomologous 
if there exists a function $V: X \rightarrow G$ such that $f(x) = V(T(x))g(x)V(x)^{-1}$ for all $x \in X$. In 
this paper we will study the transitivity of $G$-extensions in the case that $G$ is an infinite-dimensional real vector space.

To this end, consider the space $\R^\omega = \prod _{n = 1}^{\infty} \mathbb{R}$ with the product topology (equivalent to the topology of pointwise convergence of sequences). This is a complete metric space with the metric

\[ 
d({a_n}, {b_n}) := \sum_{n = 1}^{\infty} {2^{-n}\frac{|a_n - b_n|}{1 + |a_n - b_n|}}.
\]

Our goal is to study transitivity of $\mathbb{R}^\omega$-extensions (which we'll now write additively) of Anosov diffeomorphisms for 
which the cocycle is H$\ddot{o}$lder continuous. So, as before, let $T: X\rightarrow X$ be an Anosov 
diffeomorphism, $f: X\rightarrow \mathbb{R}^\omega$ a H$\ddot{o}$lder cocycle. An obvious 
obstruction to transitivity of $f$ is that $f(X)$ be contained on one side of a hyperplane; that is, there exists a 
continuous nonzero linear functional $L: \mathbb{R}^\omega \rightarrow \mathbb{R}$ such that 
$L(f(x)) \geq 0$ for all $x \in X$. A more general obstruction to transitivity is when $f$ is cohomologous 
to such a cocycle. Let

\begin{center} $P_f := \{ \sum_{k = 0}^{m - 1} {f^k(x)}$, where $T^m(x) = x \}$ \end{center}
be the set of periodic data for $f$. By the following result, $f$ being cohomologous to a cocycle whose image is contained in $\{y | L(y) \geq 0\}$ is equivalent to having $P_f$ be contained in this set. For the proof, see \cite{Bou}.

\begin{thm}[Positive Livsic Theorem] Let $X$ be a Riemannian manifold, $T: X\rightarrow X$ a transitive Anosov diffeomorphism, and $f: X \rightarrow \mathbb{R}$ a H$\ddot{o}$lder cocycle. If $P_f \subset \mathbb{R}^+ = \{x \in \mathbb{R} | x \geq 0\}$, then $f$ is cohomologous to a H$\ddot{o}$lder cocycle with image contained in $\mathbb{R}^+$. \end{thm}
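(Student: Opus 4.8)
The plan is to recast the conclusion as the existence of a \emph{subaction}. Writing the group operation additively, $g$ is cohomologous to $f$ precisely when $g(x) = f(x) + V(x) - V(T(x))$ for some transfer function $V$, so the target is a H\"older $V \colon X \to \mathbb{R}$ with
\[
V(T(x)) - V(x) \le f(x) \quad \text{for all } x, \qquad \text{whence } g := f + V - V\circ T \ge 0.
\]
The hypothesis $P_f \subset \mathbb{R}^+$ is exactly the ``no negative cycle'' consistency condition for this inequality: summing $V(T(x)) - V(x) \le f(x)$ along a periodic orbit $T^m(x) = x$ telescopes to $0 \le \sum_{k=0}^{m-1} f(T^k(x))$. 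Equivalently, setting $\beta := \inf_\mu \int f\, d\mu$ over $T$-invariant probability measures, density of periodic-orbit measures among invariant measures (a consequence of the closing lemma) together with $P_f \subset \mathbb{R}^+$ gives $\beta \ge 0$; it suffices to produce $V$ achieving $g \ge \beta \ge 0$.

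To construct $V$ I would use a value-function (Ma\~n\'e potential) built from the dynamics. Fix a point $p$ with dense forward orbit (available since $T$ is transitive), and for $z \in X$ set
\[
V(z) := \inf\Big\{ \textstyle\sum_{k=0}^{n-1} f(T^k(y)) : n \ge 0,\ y \text{ joins } p \text{ to } z \text{ along orbit segments with small jumps} \Big\},
\]
the least accumulated $f$-cost of travelling from $p$ to $z$. The one-step bound $V(T(x)) \le V(x) + f(x)$ is then immediate, since any near-optimal path reaching $x$ may be extended by the single dynamical edge $x \mapsto T(x)$ of cost $f(x)$; in the limit of vanishing jump size this is exactly the subaction relation and gives $g \ge 0$.

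The two substantive points are finiteness and H\"older regularity of $V$. Finiteness (the infimum is not $-\infty$) is where the hypothesis is used: a path of very negative cost must contain a long, nearly closed loop, and the Anosov closing lemma replaces such a loop by a genuine periodic orbit whose $f$-sum lies in $P_f \subset \mathbb{R}^+$, the correction being controlled by the H\"older norm of $f$ and the exponential shadowing of Anosov systems; since $X$ is compact only boundedly many ``essential'' non-looping steps occur, so the cost cannot drop below a fixed constant. Regularity is the classical Livsic estimate: if $z, z'$ are close, the closing lemma lets one compare near-optimal paths orbit point by orbit point, and H\"older continuity of $f$ turns the comparison into a sum $C\sum_k \theta^{k\alpha}$ (with $\theta < 1$ the hyperbolicity rate and $\alpha$ the H\"older exponent) that converges because shadowing distances decay geometrically both forward along stable and backward along unstable directions.

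I expect the main obstacle to be precisely this interplay in the finiteness/regularity step: making the ``joining by small jumps'' rigorous (choosing shadowing scales, concatenating true orbit segments, and bounding the accumulated jump-and-closing errors \emph{uniformly in the path length} $n$) so that one simultaneously caps the infimum from below via $P_f \ge 0$ and obtains a H\"older modulus for $V$ independent of the competing paths. Once $V$ is finite and H\"older, the subaction inequality yields $g = f - (V\circ T - V) \ge 0$ with $g$ H\"older, exhibiting the required cohomologous nonnegative cocycle. This is the route carried out by Bousch and related ergodic-optimization arguments; see \cite{Bou}.
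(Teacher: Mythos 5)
First, a point of context: the paper does not prove this theorem at all --- it is quoted from the literature, with the proof explicitly deferred to Bousch \cite{Bou} --- so your proposal has to be judged against the cited argument and the standard proofs of this ``positive Livsic theorem,'' not against anything in the text.

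Your reformulation is correct and is the right starting point: the theorem amounts to producing a H\"older subaction $V$ with $V\circ T - V \le f$, the hypothesis $P_f \subset \R^+$ is the consistency condition, and (via density of periodic-orbit measures) it gives $\beta = \inf_\mu \int f\, d\mu \ge 0$. The genuine gap is exactly the step you flag as the ``expected obstacle,'' and the heuristic you offer for it is fallacious, not merely incomplete. In your finiteness argument, each nearly closed loop, when replaced by a true periodic orbit via the closing lemma, contributes an error of order $\|f\|_{\alpha}\,\delta^{\alpha}$ (with $\delta$ the near-return scale); since the periodic sums are only $\ge 0$, not bounded below by any positive quantity, nothing absorbs these errors; and a path of length $n$ may require on the order of $n$ loop removals before one reaches a $\delta$-separated remainder. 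The resulting lower bound has the shape $-N(\delta)\|f\|_{\infty} - c\,n\,\delta^{\alpha}$, which is not uniform in $n$: the ``boundedly many essential non-looping steps'' control only the remainder term, not the accumulated closure errors. Multiscale refinements do not rescue this, because making $n\,\delta^{\alpha}$ bounded forces $\delta \sim n^{-1/\alpha}$, and then the remainder count $N(\delta)\sim \delta^{-\dim X}$ is at least $n$ whenever $\alpha < \dim X$, which is the typical case. In fact, the uniform lower bound on Birkhoff sums that you need (the ``Ma\~n\'e lemma'') is, in the literature, \emph{deduced from} the existence of a subaction by telescoping, not used to construct one; your proposal has the logical order reversed at precisely the hard point, so it is a correct reduction sitting on top of an unproved lemma that is essentially equivalent to the theorem itself.

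It is also worth recording that Bousch's actual argument, the one the paper cites, avoids this difficulty by a different mechanism. He obtains the subaction as a fixed point of the Lax--Oleinik operator $(\mathcal{M}V)(y) = \min_{Tx=y}\bigl(V(x)+f(x)\bigr) - \beta$, the fixed point coming from equicontinuity of the iterates $\mathcal{M}^n 0$, which is where the H\"older/Walters condition and hyperbolicity enter. Note that for an Anosov \emph{diffeomorphism} this operator is degenerate (each $y$ has a single preimage, so a fixed point would force $f-\beta$ to be a coboundary, which is false in general); hence an essential preliminary step, absent from your sketch, is to pass to a non-invertible factor --- via Markov partitions and Sinai's trick one replaces $f$ by a cohomologous H\"older function constant on local stable sets, i.e., a function on a one-sided expanding model. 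Once the fixed point $V$ exists, monotonicity of $\mathcal{M}$ gives $\mathcal{M}^n 0 \ge V - 2\|V\|_{\infty}$ for all $n$, which is exactly the uniform lower bound your finiteness step was trying to establish directly.
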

Thus, a necessary condition for $f$ to be transitive is that $P_f$ not be contained in any hyperplane, for if $L(p) \geq 0$ for all $p\in P_f$, where $L$ is a continuous linear functional, then $L\circ f$ is cohomologous to a function with values in $\mathbb{R}^+$, hence is not a transitive cocycle from $X$ to $\mathbb{R}$, so $f$ cannot possibly be transitive. We call a cocycle $f$ for which $P_f$ is not contained in any hyperplane inseparable. 

At this point, though we will not need it, it will be instructive to describe the continuous linear functionals on $\mathbb{R}^\omega$. The following result says that each such functional is essentially a functional on $\mathbb{R}^n$ for some $n$.

\begin{prop} Let $L:\mathbb{R}^\omega \rightarrow \mathbb{R}$ be a continuous linear functional. Then for some $n$ and some linear functional $A: \mathbb{R}^n \rightarrow \mathbb{R}$, $L = A\circ \pi_n$. \end{prop}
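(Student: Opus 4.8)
The plan is to exploit the defining feature of the product topology on $\R^\omega$: every basic neighborhood of the origin constrains only finitely many coordinates. Since $L$ is continuous and $L(0) = 0$, I would first extract a neighborhood $V$ of $0$ with $|L(x)| < 1$ for all $x \in V$. By the definition of the product topology I may take $V$ to be a basic open set, so there exist a finite set $F \subset \N$ and an $\epsilon > 0$ with $V \supseteq \{x \in \R^\omega : |x_i| < \epsilon \text{ for all } i \in F\}$.

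The crucial step is a scaling argument showing that $L$ annihilates every vector supported off $F$. Suppose $x \in \R^\omega$ satisfies $x_i = 0$ for all $i \in F$. Then for every scalar $t \in \R$ the vector $tx$ still has all of its $F$-coordinates equal to $0$, hence $tx \in V$ and so $|t|\,|L(x)| = |L(tx)| < 1$. Letting $|t| \to \infty$ forces $L(x) = 0$. Thus $L$ vanishes on the subspace $W = \{x : x_i = 0 \text{ for all } i \in F\}$.

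Finally I would set $n = \max F$, so that $\ker \pi_n = \{x : x_1 = \cdots = x_n = 0\} \subseteq W \subseteq \ker L$. Since $\pi_n : \R^\omega \to \R^n$ is a surjective linear map whose kernel is contained in $\ker L$, the first isomorphism theorem produces a well-defined linear map $A : \R^n \to \R$ with $L = A \circ \pi_n$: explicitly, one sets $A(y) := L(x)$ for any $x$ with $\pi_n(x) = y$, which is independent of the choice of representative $x$ precisely because $\ker \pi_n \subseteq \ker L$.

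I do not expect a serious obstacle here; the only subtlety is recognizing that continuity at the single point $0$, combined with the finite-support nature of basic neighborhoods and the homogeneity of a linear map, already pins down the dependence on finitely many coordinates. The scaling argument in the second paragraph is what converts the \emph{a priori} bound ``$|L|$ is small near $0$ in the $F$-directions'' into the exact statement ``$L$ is identically zero in the complementary directions,'' and it is the heart of the proof.
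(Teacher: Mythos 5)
Your proof is correct and is essentially the paper's argument in slightly different clothing: both use continuity at $0$ to bound $|L|$ on the "tail" subspace of vectors vanishing in the first finitely many coordinates (the paper via the metric's tail estimate $\sum_{i>n}2^{-i}<\epsilon$, you via a basic product-topology neighborhood), and then use homogeneity of $L$ to force $L$ to vanish on that subspace, so that $L$ factors through $\pi_n$. The paper phrases the scaling step as ``the bounded functional $L - L\circ\pi_n$ must be zero since a nonzero functional is surjective onto $\R$,'' while you phrase it as $|t|\,|L(x)|<1$ for all $t$ plus the first isomorphism theorem, but these are the same mechanism.
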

\begin{proof} Since $L$ is continuous, there exists $\epsilon > 0$ such that for any $x\in \mathbb{R}^\omega$ with $d(x, 0) < \epsilon$, we have $|L(x)| < 1$. Then if we choose $n$ such that $\sum_{i = n + 1}^\infty {2^{-i}} < \epsilon$, then for any $x\in \mathbb{R}^\omega$, $|L(x) - L(\pi_n(x))| < 1$. But if $L'$ is a nonzero linear functional on any real vector space $V$, then $L'(V) = \mathbb{R}$, since $L'(\alpha v) = \alpha L'(v)$ for all $\alpha \in \mathbb{R}$, and $v\in V$. Thus, $L - L\circ \pi_n$ must be the zero functional, i.e. $L = L_{|\mathbb{R}^n}\circ \pi_n$, as desired. \end{proof}

Our main result is that inseparability is not only necessary but also sufficient for transitivity when $X$ is an infranilmanifold.
\begin{thm} \label{main} Let $X$ be an infranilmanifold, $T: X \rightarrow X$ an Anosov diffeomorphism, and $f: X \rightarrow \mathbb{R}^\omega$ a H$\ddot{o}$lder cocycle. Then $f$ is transitive if and only if it is inseparable. \end{thm}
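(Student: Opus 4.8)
The ``only if'' direction is essentially contained in the discussion preceding the statement. If $f$ is separable, choose a nonzero continuous functional $L$ with $L(p)\ge 0$ for all $p\in P_f$. The map $\mathrm{id}\times L:X\times\R^\omega\to X\times\R$ is continuous, surjective, and intertwines $T_f$ with $T_{L\circ f}$ since $L$ is linear ($L(f(x)+v)=(L\circ f)(x)+L(v)$); because $P_{L\circ f}=L(P_f)\subset\R^+$, the Positive Livsic Theorem makes $L\circ f$ cohomologous to a nonnegative cocycle, whose extension cannot be transitive as its $\R$-coordinate is nondecreasing along forward orbits. A continuous surjective factor of a transitive system is transitive, so $T_f$ cannot be transitive. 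I therefore concentrate on the ``if'' direction.

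My plan is to reduce everything to the finite-dimensional theorem of Nitica and Pollicott via the projections $\pi_n:\R^\omega\to\R^n$. Write $f_n:=\pi_n\circ f$, a H\"older cocycle into $\R^n$. First I would record that $f$ is inseparable if and only if every $f_n$ is inseparable: since $\pi_n$ is linear we have $P_{f_n}=\pi_n(P_f)$, and by the Proposition every nonzero continuous functional on $\R^\omega$ has the form $A\circ\pi_n$. Hence $P_f$ lies in a half-space $\{A\circ\pi_n\ge 0\}$ exactly when $P_{f_n}$ lies in $\{A\ge 0\}$, which gives the claimed equivalence by contraposition. Assuming $f$ inseparable, each $f_n$ is then inseparable, and \cite{NP} applies: each extension $T_{f_n}$ is transitive on $X\times\R^n$ (in fact topologically weak mixing).

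The heart of the matter is to pass from transitivity of all the finite extensions $T_{f_n}$ to transitivity of the single infinite extension $T_f$; the obstacle is that \cite{NP} produces, for each $n$, \emph{some} point of $X\times\R^n$ with dense forward orbit, whereas I need one point of $X\times\R^\omega$ working in all coordinates at once. I would avoid a direct construction by passing to open-set transitivity and Baire category. Because $X\times\R^\omega$ carries the product topology, a basic open set constrains only finitely many $\R$-coordinates; given basic open $\mathcal U,\mathcal V$ involving coordinates $\le N$, the intersection $T_f^{-k}(\mathcal V)\cap\mathcal U$ is nonempty precisely when the corresponding basic open sets in $X\times\R^N$ meet under $T_{f_N}^{-k}$, the unconstrained coordinates being filled in freely and the identical time $k$ being supplied by the joint dynamics of $T_{f_N}$. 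Transitivity of $T_{f_N}$ on the isolated-point-free space $X\times\R^N$ yields, by the Birkhoff transitivity theorem, open-set transitivity of $T_{f_N}$; the reduction then upgrades this to open-set transitivity of $T_f$, namely that for every pair of nonempty open $\mathcal U,\mathcal V\subset X\times\R^\omega$ there is $k\ge 0$ with $T_f^k(\mathcal U)\cap\mathcal V\neq\emptyset$.

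Finally I would invoke Baire. The space $X\times\R^\omega$ is separable and completely metrizable, hence a Baire space, has no isolated points, and $T_f$ is a homeomorphism. Fixing a countable base $\{V_j\}$, each set $\bigcup_{k\ge 0}T_f^{-k}(V_j)$ is open and, by the open-set transitivity just established, dense; their intersection, the set of points with dense forward orbit, is therefore a dense $G_\delta$, in particular nonempty, proving $T_f$ transitive. The identical argument applied to $T_f\times T_f$ (using $(\R^\omega)^2\cong\R^\omega$ and that each $T_{f_n}\times T_{f_n}$ is transitive, i.e.\ that $T_{f_n}$ is weak mixing) yields topological weak mixing, the stronger conclusion advertised. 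This last leveraging of the finite-dimensional results into a statement about a single orbit in the infinite-dimensional phase space is the main difficulty, and it is exactly why weak mixing of the finite pieces is needed.
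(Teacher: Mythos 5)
Your proposal is correct, and both directions of the outer structure match the paper: necessity via the Positive Livsic Theorem exactly as in the paper's Section 2 discussion, and sufficiency by reducing to Theorem \ref{npthm} through the projected cocycles $f_n = \pi_n\circ f$ (the paper proves a lemma that each $f_n$ is H\"older, which you use implicitly, and it merely asserts the inseparability of each $f_n$ that you justify via the Proposition). But the way you leverage transitivity of all the $T_{f_n}$ into transitivity of $T_f$ is genuinely different. The paper runs Baire category on the base $X$: since transitivity of a point $(x',v)$ for $T_{f_n}$ is independent of the fiber coordinate $v$, the set of $x$ with $(x,0)$ transitive for $T_{f_n}$ is a dense $G_\delta$ in $X$, so one point $x$ works for \emph{all} $n$ simultaneously; a direct tail estimate $\sum_{i>n}2^{-i}<\epsilon/2$ then shows this single point $(x,0)$ is transitive for $T_f$. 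You instead prove open-set transitivity of $T_f$ directly --- basic open sets of $X\times\R^\omega$ constrain only finitely many coordinates, so $T_f^{-k}(\mathcal V)\cap\mathcal U\neq\emptyset$ reduces to the same statement for $T_{f_N}$ --- and then apply the Birkhoff/Baire argument on the Polish space $X\times\R^\omega$ itself to extract a dense $G_\delta$ of transitive points. Your route is more modular: it never requires a transitive point common to all the finite-dimensional systems, and the weak mixing upgrade is immediate by the same reduction applied to $T_f\times T_f$. The paper's route yields the more concrete intermediate fact that some $(x,0)$ is simultaneously transitive for every $T_{f_n}$, which it reuses in Section 4 for the Hilbert-space remark --- a setting where your reduction would break down, since balls in a Hilbert space constrain all coordinates at once, which is precisely why the product topology is essential in both arguments. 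One small correction to your closing commentary: weak mixing of the finite pieces is \emph{not} needed for the transitivity conclusion (plain transitivity of each $T_{f_N}$ suffices, in your argument as in the paper's); it is needed only to obtain the weak mixing conclusion for $T_f$.
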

Our proof relies upon the following result from \cite{NP}.

\begin{thm} \label{npthm} Let $X$ be an infranilmanifold, $T: X \rightarrow X$ an Anosov diffeomorphism, and $f: X \rightarrow \mathbb{R}^n$ a H$\ddot{o}$lder cocycle. Then $f$ is transitive if and only if it is inseparable. \end{thm}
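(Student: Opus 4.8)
The two directions of the equivalence need very different tools, so I will separate them. Necessity (transitive $\Rightarrow$ inseparable) is the direction already foreshadowed above: if $f$ were separable, pick a nonzero continuous linear functional $L$ with $L(P_f)\subseteq\R^+$. Then $L\circ f:X\to\R$ has periodic data $L(P_f)\subseteq\R^+$, so by the Positive Livsic Theorem it is cohomologous to a cocycle with nonnegative image. The map $(x,v)\mapsto(x,L(v))$ is a factor map intertwining $T_f$ with $T_{L\circ f}$, and after the cohomology change of variables the fibre coordinate of $T_{L\circ f}$ never decreases along orbits; hence $T_{L\circ f}$ has no dense orbit, and neither can $T_f$. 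For sufficiency the strategy is to show that inseparability forces the achievable fibre displacements to be dense, and that this density produces a dense orbit. Writing $S_m f(x):=\sum_{k=0}^{m-1}f(T^k x)$ for the displacement along an orbit segment, set
\[
\Gamma:=\overline{\{\,v\in\R^n:\ \text{for every }\epsilon>0\text{ there are }x\in X,\ m\ge1\text{ with }d(T^m x,x)<\epsilon\text{ and }|S_m f(x)-v|<\epsilon\,\}}.
\]
The goal is to prove $\Gamma=\R^n$ and then deduce transitivity from this.

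Two standard dynamical facts feed into $\Gamma$. First, every periodic datum lies in $\Gamma$: if $T^m p=p$ then $S_m f(p)\in P_f$ is an exact return displacement, so $P_f\subseteq\Gamma$, and the Anosov closing lemma together with the H\"older continuity of $f$ makes this robust, so that near-returns have Birkhoff sums close to genuine periodic data. Second, a transitive Anosov diffeomorphism of an infranilmanifold is topologically mixing and enjoys the specification property, so two near-return segments can be shadowed consecutively by a single near-return orbit whose displacement is, up to arbitrarily small error, the sum of the two. Hence $\Gamma$ is closed under addition: it is a closed sub-semigroup of $\R^n$ containing $P_f$.

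Inseparability now enters twice at the level of pure convex geometry and Livsic theory. Since $P_f\subseteq\Gamma$ lies in no closed half-space $\{L\ge0\}$, neither does $\Gamma$; and a closed sub-semigroup of $\R^n$ contained in no half-space is automatically a closed subgroup (absence of a supporting functional lets one approximate $-v$ by semigroup elements for each $v$). Separately, I claim the closed subgroup generated by $P_f$ is all of $\R^n$. If it were a proper closed subgroup $H$, then $\R^n/H\cong\R^a\times\mathbb T^b$ would be a nontrivial abelian Lie group, and the projected cocycle $\bar f:X\to\R^n/H$ would have trivial periodic data; by the scalar Livsic theorem on the $\R^a$ factor and the torus-valued Livsic theorem on the $\mathbb T^b$ factor, $\bar f$ would be a coboundary, so $f$ would be cohomologous to an $H$-valued cocycle $g$. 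If $H$ spans a proper subspace then $L\circ g\equiv0$ for some nonzero $L$; if $H$ spans $\R^n$ but is a proper (lattice-containing) subgroup, then a suitable integral functional makes $L\circ g$ integer-valued and hence constant on the connected manifold $X$. Either way $L(P_f)$ is confined to a half-line, contradicting inseparability. Thus $\overline{\langle P_f\rangle}=\R^n$, and since $\Gamma$ is a closed group containing $P_f$ we get $\Gamma\supseteq\overline{\langle P_f\rangle}=\R^n$, i.e. $\Gamma=\R^n$.

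The remaining step, converting $\Gamma=\R^n$ into genuine transitivity, is the one I expect to be the \textbf{main obstacle}. Given nonempty open $U\ni(x_1,v_1)$ and $V\ni(x_2,v_2)$ in $X\times\R^n$, transitivity of the base yields a connecting orbit from near $x_1$ to near $x_2$ with some (uncontrolled) displacement $w_0$; prepending a long near-return loop at $x_1$ that realizes a displacement close to $(v_2-v_1)-w_0$ — possible precisely because $\Gamma=\R^n$ — and shadowing the concatenation produces a point of $U$ whose forward orbit meets $V$. Since $X\times\R^n$ is Polish, this open-set mixing condition gives a dense orbit by Baire category, and running the displacement-realization independently on two coordinates upgrades this to topological weak mixing. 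The delicate part is uniform control of the shadowing error terms over the non-compact fibre and the absorption of the uncontrolled connecting displacements into the $\epsilon$-approximation; managing these estimates is where the substantive work lies.
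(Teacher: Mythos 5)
First, a structural point: the paper never proves this statement. Theorem~\ref{npthm} is imported wholesale from \cite{NP} (``Our proof relies upon the following result from \cite{NP}''), so there is no internal proof to compare against, and your proposal has to stand on its own as a reconstruction of the Nitica--Pollicott argument. Your necessity direction is correct and is essentially the argument the paper itself gives in Section~2 right after the Positive Livsic Theorem (compose with $L$, apply Livsic, observe the fibre coordinate of the conjugated extension is monotone, push transitivity through the factor map $(x,v)\mapsto(x,L(v))$). Your sufficiency outline also follows the right general strategy --- return weights, closing lemma plus H\"older summation, a semigroup/group dichotomy, Livsic on a quotient --- but several of its load-bearing steps have genuine gaps.

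Concretely: (i) the additivity of $\Gamma$ does not follow from specification as you state it. Specification shadows the two prescribed segments across connecting gaps of a fixed length $M(\epsilon)$, and during each gap the cocycle accumulates a displacement of size up to $M(\epsilon)\sup_X|f|$ --- bounded, but in no way small --- so the displacement of the concatenation is \emph{not} the sum of the two up to arbitrarily small error. What actually works is concatenating two near-returns based at (essentially) the same point and using the exponentially graded closeness in the Anosov closing lemma together with the H\"older summation estimate (what this paper cites as Lemma~8 of \cite{NP}); transporting a loop from one base point to another then still costs two uncancelled connecting displacements, and handling that is real work your sketch omits. (ii) Your final step is incomplete by your own admission, and the hole is exactly where you flag it: $\Gamma=\R^n$ only produces a near-return with prescribed displacement at \emph{some} uncontrolled base point, while your connecting argument needs one based near $x_1$; upgrading ``realizable somewhere'' to ``realizable near every point'' is the missing uniformity, not a routine estimate. (iii) In the quotient-Livsic step, the inference ``$\bar f$ is a coboundary, so $f$ is cohomologous to an $H$-valued cocycle'' hides a topological obstruction: the transfer function $\bar V\colon X\to\R^a\times\mathbb{T}^b$ need not lift to $\R^n$, and killing the resulting class uses that $T^*-\mathrm{id}$ is invertible on $H^1(X;\Z)$ for an Anosov diffeomorphism of an infranilmanifold. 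That is precisely where the infranilmanifold hypothesis does work beyond guaranteeing transitivity of the base, and it is invisible in your sketch. So the proposal is a reasonable outline of the known strategy, but with (i)--(iii) open it is not yet a proof.
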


\section{Proof of Main Result}

\begin{proof}[Proof of Theorem \ref{main}] Let $X$, $T$, $f$ be as in the statement of the theorem. Consider 
for each $n \geq 1$ the natural subspace $\mathbb{R}^n  = \{\{x_i\} | x_i = 0$ for $i > n\}$ of $
\mathbb{R}^\omega$. Let $\pi_n : \mathbb{R}^\omega \rightarrow \mathbb{R}^n$ be the projection, 
which just returns the first $n$ coordinates. Then we have the following \begin{lem}
$f_n := \pi_n \circ f$ is H$\ddot{o}
$lder with respect to the usual Euclidean metric on $\mathbb{R}^n$. \end{lem}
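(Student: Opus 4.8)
The plan is to transfer the Hölder estimate for $f$, which is stated with respect to the bounded product metric $d$ on $\mathbb{R}^\omega$, into a genuine Euclidean Hölder estimate for $f_n$, exploiting crucially that $\pi_n$ retains only the first $n$ coordinates. Fix a Riemannian metric on $X$ inducing a distance $\rho$, and let $C, \alpha > 0$ be constants with $d(f(x), f(y)) \le C\,\rho(x,y)^\alpha$ for all $x, y \in X$. Writing $a = f(x)$, $b = f(y)$ and $\delta = d(a,b)$, the key observation is that each summand of $d$ is dominated by the whole sum, so for every index $i$ we have $2^{-i}\frac{|a_i - b_i|}{1 + |a_i - b_i|} \le \delta$, that is, $\frac{|a_i - b_i|}{1 + |a_i - b_i|} \le 2^i \delta$.

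Next I would invert the increasing map $t \mapsto t/(1+t)$. Since $s/(1-s) \le 2s$ whenever $s \le 1/2$, as soon as $2^i \delta \le 1/2$ one obtains $|a_i - b_i| \le 2^{i+1}\delta$. For the finitely many indices $i \le n$ this holds uniformly once $\delta \le 2^{-(n+1)}$, and then $|a_i - b_i| \le 2^{n+1}\delta$ for each such $i$, whence $\|\pi_n(a) - \pi_n(b)\| \le \sqrt{n}\,2^{n+1}\delta \le \sqrt{n}\,2^{n+1}C\,\rho(x,y)^\alpha$. The condition $\delta \le 2^{-(n+1)}$ is guaranteed whenever $\rho(x,y) < \eta := (2^{-(n+1)}/C)^{1/\alpha}$, so $f_n$ satisfies a Hölder estimate of exponent $\alpha$ on all sufficiently close pairs of points.

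It remains to cover pairs with $\rho(x,y) \ge \eta$, and here I would invoke compactness of the infranilmanifold $X$: since $f$ is continuous and $X$ is compact, $f(X)$ is compact, hence so is $\pi_n(f(X)) \subset \mathbb{R}^n$, which therefore has finite Euclidean diameter $M$. For any such distant pair, $\|\pi_n(f(x)) - \pi_n(f(y))\| \le M \le (M/\eta^\alpha)\,\rho(x,y)^\alpha$. Taking the larger of the two constants $\sqrt{n}\,2^{n+1}C$ and $M/\eta^\alpha$ then yields a single Hölder estimate for $f_n$, with the same exponent $\alpha$, valid for all $x, y$. The one genuine subtlety — and the reason the naive coordinatewise comparison is not immediate — is that the inversion of $t/(1+t)$ degenerates as $2^i\delta \to 1$, so the estimate must be split into a small-distance regime, where this inversion is controlled and the factor $2^i$ is bounded by $2^n$ precisely because only finitely many coordinates survive, and a large-distance regime, disposed of by boundedness of the image.
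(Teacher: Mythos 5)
Your proof is correct, and it reaches the same conclusion as the paper's (Hölder with the same exponent $\alpha$, constant growing like $2^n$), but it organizes the key step differently. Both arguments begin with the same observation: each summand of $d$ is dominated by the whole sum, so $\frac{|a_i - b_i|}{1 + |a_i - b_i|} \le 2^i\, d(a,b) \le 2^n\, d(a,b)$ for $i \le n$, and both must then invert the map $t \mapsto t/(1+t)$, whose inverse degenerates as the value approaches $1$. The paper handles this degeneracy in a single global stroke: compactness of $X$ bounds every coordinate difference on the image, $|f(x)_i - f(y)_i| \le C$, which gives the uniform inequality $t \le (1+C)\frac{t}{1+t}$ valid for all $t \le C$, and hence one chain of inequalities holding for every pair $x, y$ simultaneously, namely $\sum_{i \le n} |f(x)_i - f(y)_i| \le 2^n(1+C)\, d(f(x), f(y))$. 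You instead invert only in the regime where the inversion is tame ($2^i \delta \le 1/2$, giving $|a_i - b_i| \le 2^{i+1}\delta$), which forces a case split, and you dispose of the distant pairs by the diameter bound $M/\eta^\alpha$ --- so compactness enters your argument at the patching stage rather than inside the inversion. The two uses of compactness are interchangeable; the paper's version avoids any case analysis and is a line shorter, while yours has the minor conceptual advantage of isolating the fact that the small-distance estimate $\|\pi_n(a) - \pi_n(b)\| \le \sqrt{n}\, 2^{n+1} d(a,b)$ requires no compactness at all, boundedness of the image being needed only for the far pairs. Both proofs are complete and correct.
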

\begin{proof}[Proof of lemma.] Since $X$ is compact, 
the projection of $f(X)$ onto each component is bounded. In particular, for some constant $C > 0$, $ 
|a_i - b_i| \leq C$ for all $\{a_i\}$, $\{b_i\} \in f(X)$ and all $i \leq n$. Thus, if $d(f(x), f(y)) \leq C'd_X(x, 
y)^\alpha$ for all $x$, $y\in X$, then for each $i \leq n$, and $x$, $y\in X$,
\[
|f(x)_i - f(y)_i| \leq \left(2^{-i}\frac{|x_i - y_i|}{1 + |x_i - y_i|}\right)2^n(1 + C),
\]
so
\[
\sum_{i = 1}^n {|f(x)_i - f(y)_i|} \leq 2^n(1 + C)d(f_n(x), f_n(y)) \leq C''d_X(x, y)^\alpha,
\]
so $f_n$ is H$\ddot{o}$lder with respect to the usual Euclidean metric on $\mathbb{R}^n$.
\end{proof}
In particular, we may apply Theorem 1.3 to $f_n$. Now $f_n: X\rightarrow \mathbb{R}^n$ is inseparable for each $n$, because $f$ is inseparable. Therefore, by Theorem 1.3, it is transitive (as a cocycle to $\mathbb{R}^n$). Now I claim that there exists $x \in X$ such that $(x, 0)\in X \times \mathbb{R}^n$ is a transitive point for $T_{f_n}$ for each $n$. Indeed, for each $n$ let $\{U_{nm}\}_{m\in \mathbb{Z}}$ be a countable basis for $X\times \mathbb{R}^n$. Then we have
\begin{equation} \{x\in X | (x, 0) \text{ is a transitive point for each } T_{f_n}\} = \bigcap_{n\in \mathbb{N}} \bigcap_{m\in \mathbb{Z}} \bigcup_{k\in \mathbb{N}} B_{knm} \end{equation}
where $B_{knm} := \{x\in X | T_{f_n}^k(x, 0)\in U_{nm} \}$. Now clearly $\cup_{k\in \mathbb{N}} B_{knm}$ is open, and we claim that it is dense in $X$ for each $n$ and $m$. That $\{x\in X | (x, 0)$ is a transitive point for each $T_{f_n}\} \neq \phi$ then follows from the Baire Category Theorem, since $X$ is a complete metric space. So suppose $x\in X$. Then since each $f_n$ is transitive, arbitrarily close to $x$ we may find $x'\in X$ such that $(x', v)$ is transitive for $T_{f_n}$ for some $v\in \mathbb{R}^n$. But it's clear from the way that an extension is defined that transitivity of a point doesn't depend on the second component, so $(x', 0)$ is transitive for $T_{f_n}$, and in particular $(x', 0)\in \cup_{k\in \mathbb{N}} B_{knm}$. Since we could choose $x'$ arbitrarily close to $x$, $\cup_{k\in \mathbb{N}} B_{knm}$ is dense, as claimed. Thus, we have a point $x\in X$ such that $(x, 0)$ is transitive for each $T_{f_n}$. I claim that $(x, 0)$ is a transitive point for $T_f$. Indeed, suppose $\epsilon > 0$ and $(y, v) \in X\times \mathbb{R}^\omega$. We must show that for some $k\geq 0$, $d_X(f^k(x), y) < \epsilon$ and $d(\beta_k(x), v) < \epsilon$, where $\beta_k(x) := \pi_{\mathbb{R}^\omega}(T_f^k(x, 0))$. Now for any $n \geq 1$, 
\[
d(v, \beta_k(x)) = \sum_{i = 1}^\infty  {2^{-i}\frac{|\beta_k(x)_i - v_i|}{1 + |\beta_k(x)_i - v_i|}} \leq \sum_{i = 1}^n  {2^{-i}\frac{|\beta_k(x)_i - v_i|}{1 + |\beta_k(x)_i - v_i|}} + \sum_{i = n + 1}^\infty {2^{-i}}.
\]
For $n$ sufficiently large, the second sum is $< \epsilon/2$ and then for this $n$, since $(x, 0)$ is transitive for $T_{f_n}$, there exists a $k$ such that the first sum is $< \epsilon/2$ and $d_X(f^k(x), y) < \epsilon$. Then this $k$ does the trick.  This completes the proof. \end{proof}

\section{Additional Remarks}

In this section we will show that given an infranilmanifold $X$ and an Anosov diffeomorphism $T: X\rightarrow X$, there actually exists an inseparable H$\ddot{o}$lder cocycle $f: X\rightarrow \mathbb{R}^\omega$, so the class of functions to which our theorem applies is nonempty. We then make some additional brief remarks on a stronger version of our theorem and on transitivity and stable transitivity of infinite-dimensional Euclidean extensions. 

Recall that an orthant of $\mathbb{R}^n$ is the set of vectors such that each component has some specified sign, e.g. the set of vectors with all of the first $(n - 1)$ components positive and the $n$th component negative.
\begin{lem} Suppose that $p_1, ..., p_{2^n} \in \mathbb{R}^n$ are such that the interior of each orthant of $\mathbb{R}^n$ contains one of the $p_i$. Then for any hyperplane in $\mathbb{R}^n$, there are $p_i$ lying on either side of it. (That is, for each side of the hyperplane, there is a $p_i$ lying on that side of it.) \end{lem}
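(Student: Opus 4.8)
The plan is to reduce this to a direct computation with signs. First I would pin down the relevant notion of hyperplane: the obstructions to inseparability are the zero sets of nonzero \emph{linear} functionals, so the hyperplanes that matter pass through the origin. Such a hyperplane is $\{x : L(x) = 0\}$ for some $L(x) = \sum_{j=1}^n a_j x_j$ with $(a_1,\dots,a_n)\neq 0$, and its two sides are $\{L>0\}$ and $\{L<0\}$. My goal is to exhibit one orthant on whose interior $L$ is everywhere positive and another on whose interior $L$ is everywhere negative; since each orthant's interior contains one of the $p_i$, this immediately yields a point on each side.

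Concretely, for each index $j$ I would choose a sign $\epsilon_j \in \{+,-\}$ by taking $\epsilon_j$ to be the sign of $a_j$ when $a_j \neq 0$, and $\epsilon_j = +$ (arbitrarily) when $a_j = 0$. Consider the orthant whose interior consists of the points $x$ with $x_j$ strictly of sign $\epsilon_j$ for every $j$. For such an $x$ we have $a_j x_j > 0$ whenever $a_j \neq 0$, while $a_j x_j = 0$ whenever $a_j = 0$; since at least one $a_j \neq 0$ and the corresponding coordinate $x_j \neq 0$ (we are in the interior), summing gives $L(x) = \sum_j a_j x_j > 0$. In particular the point $p_i$ lying in the interior of this orthant satisfies $L(p_i) > 0$.

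The opposite orthant, with sign vector $(-\epsilon_1,\dots,-\epsilon_n)$, is then handled identically: on its interior every term $a_j x_j$ is $\le 0$, with at least one strictly negative, so $L < 0$ there, and the $p_{i'}$ it contains satisfies $L(p_{i'}) < 0$. Hence for every hyperplane through the origin there are $p_i$ strictly on both sides, which is the assertion.

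I do not expect any genuinely hard step; the argument is pure bookkeeping of signs. The only points requiring care are the treatment of vanishing coefficients $a_j$ — a zero coefficient contributes nothing to $L$ regardless of the sign of $x_j$, so the choice of $\epsilon_j$ there is immaterial — and the use of the word \emph{interior}: it is precisely the strict nonvanishing of each coordinate on the interior of an orthant that upgrades the weak inequality $L(x) \ge 0$ to the strict $L(x) > 0$ needed to place $p_i$ strictly on one side.
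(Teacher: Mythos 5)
Your proof is correct and is essentially the paper's own argument: the paper likewise takes the normal vector $v$ of the hyperplane, picks the $p_i$ in the orthant whose coordinate signs match those of $v$ (signs arbitrary where $v_k=0$) to get $\langle v, p_i\rangle > 0$, and the $p_j$ in the opposite orthant to get $\langle v, p_j\rangle < 0$. You merely spell out more explicitly why interiority yields the strict inequalities, which the paper leaves implicit.
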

\begin{proof} Let $0\neq v \in \mathbb{R}^n$. We must show that $\langle v, p_i \rangle > 0$ for some $i$ and $\langle v, p_j \rangle < 0$ for some $j$. For this, just choose $p_i$ and $p_j$ such that for each $k$, the $k$th components of $v$ and $p_i$ have the same sign (where the sign doesn't matter if the $k$th component of $v$ is $0$), and the $k$th components of $v$ and $p_j$ have opposite signs. \end{proof}

\begin{prop} Let $X$ be an infranilmanifold, $T: X\rightarrow X$ an Anosov diffeomorphism. Then there exists an inseparable and hence transitive H$\ddot{o}$lder cocycle $f: X\rightarrow \mathbb{R}^\omega$. \end{prop}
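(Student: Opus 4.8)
The plan is to build $f$ one coordinate at a time out of bump functions, arranging the periodic data so that it cannot be trapped on one side of any hyperplane. First I would reduce everything to a finite-dimensional statement at each level. Writing $f=(g_1,g_2,\dots)$ for the coordinate functions of $f$ and $f_n=\pi_n\circ f=(g_1,\dots,g_n)$, recall from the description of the continuous linear functionals on $\mathbb{R}^\omega$ that every such functional has the form $A\circ\pi_n$. Since $\pi_n$ is linear and commutes with the orbit sums, $\pi_n(P_f)=P_{f_n}$, so $P_f$ lies in a closed half-space $\{L\ge 0\}$ of $\mathbb{R}^\omega$ if and only if $P_{f_n}$ lies in a closed half-space $\{A\ge 0\}$ of $\mathbb{R}^n$ for some $n$. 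Hence $f$ is inseparable precisely when every $f_n$ is inseparable. By the preceding lemma on orthants, to make $f_n$ inseparable it suffices to produce periodic points whose $f_n$-data lie in the interior of each of the $2^n$ orthants of $\mathbb{R}^n$. Thus the whole task reduces to constructing a single H\"older $f$ for which, for every $n$, the set $P_{f_n}$ meets every open orthant of $\mathbb{R}^n$.

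For the construction I would fix a countable family of pairwise distinct periodic orbits $O_1,O_2,\dots$, a marked point $q_j\in O_j$ on each, and pairwise disjoint balls $B_j$ of radius $r_j$ centered at $q_j$ satisfying the isolation property $B_j\cap\bigcup_\ell O_\ell=\{q_j\}$. On each $B_j$ I put a tent bump $\phi_j$ supported in $B_j$ with $\phi_j(q_j)=h_j>0$, and I set $g_i:=\sum_j c_{ij}\phi_j$ with $c_{ij}\in\{+1,-1\}$. Because the balls are disjoint and isolate their centers, the periodic datum of $g_i$ over $O_j$ is exactly $c_{ij}h_j$, so the $f_n$-datum over $O_j$ equals $h_j(c_{1j},\dots,c_{nj})$ and lies in the open orthant with sign pattern $(c_{1j},\dots,c_{nj})$. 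Choosing the sign matrix $(c_{ij})$ so that every finite pattern $\varepsilon\in\{\pm1\}^n$ occurs as the length-$n$ prefix of some column then forces $P_{f_n}$ to meet every orthant, for every $n$. To keep $f$ H\"older I would shrink the heights so that $h_j\le r_j^{\alpha}$; a tent bump then has $\alpha$-H\"older constant at most $1$, and since the supports are disjoint a short length-space argument (any path from $x\in B_j$ to $y\notin B_j$ crosses $\partial B_j$, where $\phi_j$ vanishes) bounds the $\alpha$-H\"older constant of each $g_i$ by $2$, uniformly in $i$. The weights in the metric on $\mathbb{R}^\omega$ then give $d(f(x),f(y))\le\sum_i 2^{-i}\lvert g_i(x)-g_i(y)\rvert\le 2\,d_X(x,y)^{\alpha}$, so $f$ is H\"older; note that shrinking the $h_j$ only shrinks the magnitudes of the data and leaves the sign patterns intact.

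The main obstacle is the selection in the previous paragraph: arranging countably many distinct periodic orbits together with pairwise disjoint balls, each of which meets the \emph{entire} family only at its own marked point. I would do this by induction, at stage $j$ first choosing a periodic orbit $O_j$ disjoint from the finitely many (small, already fixed) balls $B_1,\dots,B_{j-1}$, and then adding a small ball $B_j$ about a point $q_j\in O_j$ that avoids all previously used orbit points and the other points of $O_j$; since $q_j$ then lies outside $B_1,\dots,B_{j-1}$, no earlier ball need be altered. The one point requiring justification is that at each stage a periodic orbit missing the prescribed small open set $\bigcup_{\ell<j}\overline{B_\ell}$ exists, and this follows from the abundance of periodic orbits of a transitive (indeed topologically mixing) Anosov diffeomorphism of an infranilmanifold---for instance from equidistribution of periodic orbits, or from the specification property, which yield infinitely many periodic orbits avoiding any sufficiently small open set. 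Taking $r_j$ to shrink summably keeps the excluded region uniformly small at every stage and the balls disjoint with positive radii. Granting this, the construction produces an inseparable H\"older cocycle, which is transitive by Theorem \ref{main}.
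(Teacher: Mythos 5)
Your finite-dimensional reduction (inseparability of $f$ is equivalent to inseparability of every $f_n$, via the structure of continuous functionals on $\mathbb{R}^\omega$), the orthant criterion, and the H\"older estimates for the disjoint tent bumps are all correct, and the sign-matrix idea would indeed produce an inseparable cocycle \emph{if} the orbit family you posit exists. The genuine gap is exactly at the step you yourself flag as ``the one point requiring justification'': the existence of countably many distinct periodic orbits $O_j$ with marked points $q_j$ and balls $B_j$ such that $B_j$ meets $\bigcup_\ell O_\ell$ only at $q_j$ --- equivalently, at each stage, a periodic orbit avoiding the previously fixed closed balls. Neither tool you invoke delivers this. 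Equidistribution of periodic orbits is a statement about averages over \emph{all} periodic points (with respect to the measure of maximal entropy, which has full support); it produces no single orbit avoiding a prescribed ball, and if anything indicates the opposite: typical long periodic orbits are $\epsilon$-dense and hence meet every fixed ball. The specification property also does not give the claim: it yields a periodic point shadowing prescribed orbit segments during prescribed time windows, but in the gluing gaps (of length $M(\delta)$) the orbit is unconstrained and may enter the forbidden region. Worse, your induction can genuinely get stuck: nothing prevents the supply of available orbits from consisting entirely of orbits that are $\epsilon_i$-dense with $\epsilon_i\to 0$, in which case, for every choice of center and radius, all but finitely many of them meet the ball, and no admissible next orbit exists. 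The statement you need is true for transitive Anosov diffeomorphisms, but proving it requires a different and nontrivial argument --- for instance, taking the entire family inside a Smale horseshoe (which exists via transverse homoclinic points) and using the symbolic coding, where the orbits of the sequences $(0^m1)^\infty$ accumulate only on the fixed point $0^\infty$ and hence each point of their union is isolated in the union; or an entropy estimate for the survivor set of a small hole. Selecting such a ``good'' family is the real content of your approach, and it is missing.

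The paper sidesteps this issue entirely by bootstrapping through the finite-dimensional theorem rather than building bumps around geometrically isolated orbits. Inductively, once $f_n$ is inseparable, Theorem \ref{npthm} makes $T_{f_n}$ transitive; the Anosov closing lemma together with the H\"older estimate (Lemma 8 of \cite{NP}) then shows that the set of periodic data of $f_n$ is \emph{dense} in $\mathbb{R}^n$, so one can choose $2^{n+1}$ periodic orbits whose $f_n$-data lie two in the interior of each orthant. The $(n+1)$-st coordinate $g$ is then defined by prescribing small positive sums along one orbit of each pair and small negative sums along the other --- this only requires prescribing values of $g$ at finitely many points and extending with Lipschitz constant $2^{-n}$ (partitions of unity); since the datum of an orbit depends only on the values of $g$ at its finitely many points, no disjointness or isolation of orbits is ever needed, and inseparability of $f_{n+1}$ follows from the orthant lemma. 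If you wish to keep your explicit bump-function construction, you must first prove the isolated-family lemma (e.g., via a horseshoe as sketched above); as written, the proof is incomplete at its crucial step.
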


\begin{proof} We will prove that we may actually take the H$\ddot{o}$lder exponent to be $1$. 
We construct the components of $f$ inductively. Suppose that we have already defined an inseparable Lipschitz cocycle 
$f_n:X \rightarrow \mathbb{R}^n$. We then define the cocycle $f_{n + 1}:X\rightarrow \mathbb{R}^{n 
+ 1}$ by $\pi_n \circ f_{n + 1} = f_n$ and letting the $(n + 1)$ component of $f_{n + 1}$ be $g$, 
where $g:X\rightarrow \mathbb{R}$ is to be defined so that $f_{n + 1}$ is inseparable, and $d'_{n + 1}(g(x), g(y)) \leq 2^{-n}d_X(x, y)$ for all $x$, $y\in X$, where $d'_n$ is the restriction of $d$ to the $n$th component of 
$\mathbb{R}^\omega$. It then follows that if we define $f$ by $\pi_n\circ f = f_n$, then $f$ is 
Lipschitz and inseparable (with respect to all finite-dimensional hyperplanes, which are in fact all 
hyperplanes by Proposition 1.2), and so we'll be done. Now we'll want the following
\begin{lem} If $T:X \to X$ Anosov, $f: X \to \R^n$ H$\ddot{o}$lder such that the skew-product $T_f$ is transitive, then the set of weights of $f$ is dense in $\R^n$.
\end{lem}
\begin{proof} 
We use an important basic fact about Anosov diffeomorphisms:
\begin{lem}[Anosov Closing Lemma] Let $T:X \to X$ Anosov. There exist constants $c>0,0<\lambda<1$ such that for any sufficiently small $\epsilon>0$, if $d(T^n(x),x)<\epsilon$, there is a point $p$ with $T^n(p)=p$ and \begin{equation} d(T^i(x),T^i(p)) < c\lambda^{\min(i,n-i)}\epsilon, \hspace{1mm} \forall \hspace{1mm}0\leq i\leq n \end{equation}
\end{lem}
The lemma is proven in, e.g., corollary 6.4.17 in \cite{KH}. Now take a transitive point for $T_f$, say $(x,0)$. Let $w \in \R^n$. For any $\epsilon>0$ there exists $k \in \N$ with $T_f^k(x,0)$ $\epsilon$-close to $(x,w)$. In particular, $d(T^k(x),x) < \epsilon$, so the closing lemma gives a periodic point $p$ with orbit exponentially close to the orbit of $x$. Also note that the sum of $f$ along the first $k$ points of $x$'s orbit is $\epsilon$-close to $w$. Then Lemma 8 in \cite{NP} uses the exponential closeness and the H$\ddot{o}$lder condition to conclude $\sum_{i=0}^{k} ||f(T^i(x)-f(T^i(p)|| < C\epsilon$, $C$ depending only on $T,f$. Thus the weight associated to the point $p$ we get can be made arbitrarily close to $w$. \end{proof}
Since $f_n$ was assumed inseparable, it is transitive by Theorem \ref{npthm}, so it follows from the lemma that we can find $2^{n + 1}$ periodic orbits such that for each orthant of $\mathbb{R}^n$, two of the corresponding $f_n$ sums (where a periodic $f_n$ sum is a sum of $f_n$ along a periodic orbit of $T$) lie
in its interior. For each orthant we define $g$ to have a small $g$ sum along one 
of the corresponding periodic orbits and small negative $g$ sum along the other, where how 
small it has to be is dictated by the condition that 
$d'_{n + 1}(g(x), g(y)) \leq 2^{-n}d_X(x, y)$ for all $x$, $y\in X$. 
We then extend $g$ smoothly to all of $X$ in such a way that this last condition, being Lipschitz with constant $2^-n$, is 
preserved (This can be done, e.g., using partitions of unity.). Then $f_{n + 1}$ contains periodic 
sums in each orthant of $\mathbb{R}^{n + 1}$ and is therefore inseparable by the lemma. This 
completes the proof of the proposition.
\end{proof}

We note that the argument in the proof of Theorem 1.3 may be used to show that given a Hilbert space $H$ and an inseparable H$\ddot{o}$lder cocycle $f: X\rightarrow H$, there exists $x\in X$ such that $(x, 0)$ is transitive for $T_{f_n}$ for each $n$. This can also be easily deduced from our result, since if a map from a compact space is H$\ddot{o}$lder as a map to $H$, then it's H$\ddot{o}$lder as a map to $\mathbb{R}^\omega$, and for both $H$ and $\mathbb{R}^\omega$, the subspace topology for $\mathbb{R}^n$ is just the usual Euclidean topology. Unfortunately, for Hilbert spaces transitivity for each $T_{f_n}$ doesn't immediately imply that $(x, 0)$ is transitive for $T_f$, as it does for $\mathbb{R}^\omega$.

Also a slight variation of our proof gives the stronger conclusion that $T_f$ is topologically weak mixing, meaning the map $T_f \times T_f$ on $(X \times \R^\omega) \times (X \times \R^\omega)$ is transitive. The result is proven for $\R^n$ in \cite{NP}, so we simply take $B'_{knm}$ to now be $\{(x,y) \in X \times X | (T_{f_n} \times T_{f_n})^k((x,0) \times (y,0)) \in U'_{nm}\}$, $U'_{nm}$ now being a basis for $(X \times \R^n) \times (X \times \R^n)$, and the rest of the proof is similar. 

In \cite{NP} it is deduced as a Corollary of Theorem \ref{npthm} that the class of transitive cocycles for a finite-dimensional extension is open in the supremum ($C^0$) norm; that is, every transitive cocycle is actually 
\emph{stably transitive}. No such result holds in our case. In fact, the class of transitive cocycles for an $R^
\omega$ extension has empty interior; that is, no cocycle is stably transitive. Indeed, given a 
cocycle $f: X \rightarrow \mathbb{R}^\omega$, and given $\epsilon > 0$, for $n$ sufficiently large $
\pi_n\circ f$ is an $\epsilon$-perturbation of $f$, i.e. $||f - \pi_n\circ f|| < \epsilon$, where we use the 
supremum norm, because for any $x\in \mathbb{R}^\omega$, $d(x, \pi_n(x)) < \sum_{i = n + 1}^\infty 
{2^{-i}}$. Clearly $\pi_n\circ f$ is not transitive. This observation also shows stable transitivity is impossible in the space of $\ell^p$ cocycles (with its $\ell^p$-metric) for $1\leq p <\infty$ and $c_0$ or $c$ cocyles with the $\ell^\infty$-metric (for $c$, also set the rest of $\pi_n(f(x))$'s coordinates to be the limit of the sequence $f(x)$), as well as the $C^k$ version of these spaces (e.g. if we consider $\ell^2$ cocycles whose partial derivatives up to order $k$ exist and are in $\ell^2$, any cocycle has a nontransitive perturbation whose partial derivatives up to $k$ are arbitrarily $\ell^2$-close). Also, in the H$\ddot{o}$lder metric on cocyles \begin{equation}
d_\alpha(f,g)=\sup_{x \neq y \in X} \frac{d(f(x)-g(x),f(y)-g(y))}{d(x,y)^\alpha}, \end{equation} the cocycle $f$ constructed in the proof above gives an example where stable transitivity fails as well, with cocyles to $\R^\omega$ or $\ell^p$. For instance take the $\ell^1$ metric. The cocycle we constructed satisfies a Lipschitz condition on the $n$-th coordinate with constant $2^{-n}$, so if we take the H$\ddot{o}$lder distance between $f$ and $\pi_n(f)$, we get \begin{equation}
d_1(f,\pi_n(f))=\sup_{x \neq y \in X} \frac{d(f(x)-\pi_nf(x),f(y)-\pi_nf(y))}{d(x,y)} \leq \sup_{x \neq y \in X} \frac{\sum_{i>n} 2^{-i}d(x,y)}{d(x,y)} = 2^{-n} \end{equation}

A similar failure of stable transitivity in the $\R^\omega$ or $\ell^p$ topology on cocyles is true for extensions of compact spaces, not just infranilmanifolds: No continuous cocycle is stably transitive. For let $H$ be, say, a Hilbert space and suppose $f: X
\rightarrow H$ is continuous, and $\epsilon > 0$. Consider the open sets $U_n := \{x\in X : |f(x) - 
\pi_n\circ f(x)| < \epsilon \}$. The $U_n$ form an open cover of $X$, so there is a finite subcover, 
hence $U_N = X$ for some $N$. Then $\pi_N\circ f$ is an $\epsilon$-perturbation of $f$, and 
again, $\pi_N\circ f$ is clearly not transitive.

\bibliographystyle{alpha}

\end{document}